\begin{document}

\title[\hfilneg  \hfil Some problems of summability of spectral expansions ]
{Some problems of summability of spectral expansions connected
with Laplace operator on sphere.}
\author[ Abdumalik A. Rakhimov, \hfil \hfilneg]
{Abdumalik A. Rakhimov}

\address{Abdumalik A. Rakhimov \newline
Department of Mathematical Physics, National University of
Uzbekistan, Tashkent, Republic of Uzbekistan}
\email{a\_rakhimov2008@yahoo.com}

\date{}
\thanks{}
\thanks{} \subjclass[2000]{35P, 35S, 40xx, 42xx}
\keywords{Spectral expansions, Forier-Laplace series on sphere;
\hfil\break\indent summability by Chezaro means; maximal operator
and maximal function}

\begin{abstract}
 Solution of some boundary value problems and initial problems in unique
 ball leads to the convergence and sumability problems of Fourier series of
 given function by eigenfunctions of Laplace operator on a sphere - spherical harmonics. Such a series are called as Fourier-Laplace
 series on sphere. There are a number of works devoted investigation of these expansions in different topologies and for the functions
 from the various functional spaces.
In the present work we consider only localization problems in both
usual and generalized (almost everywhere localization) senses  in
the classes of summable functions. We use Chezaro means of the
partial sums for study of summability problems. In order to prove
the main theorems we obtaine estimations for so called maximal
operator estimating it by Hardy-Littlwood's maximal function.
Significance of this function is that it majors of many important
operators of mathematical physics. For instance,
Hardy-Littlewood's maximal function majors Poisson's integral in
the space.

\end{abstract}

\maketitle \numberwithin{equation}{section}
\newtheorem{theorem}{Theorem}[section]
\newtheorem{lemma}[theorem]{Lemma}
\newtheorem{example}[theorem]{Example}
\newtheorem{definition}[theorem]{Definition}
\newtheorem{corollary}[theorem]{Corollary}

\section{Introduction}
\vspace{0.2 cm}

Denote by  $B^{N+1}$  a  unique ball in $R^{N+1}$ , surface of
this ball denote by  $S^N$  :
$$ S^N = \big\{  x=(x_1, x_2, ...... ,x_{N+1})\in R^{N+1}: \sum_{n=1}^{N+1} x_n^2 = 1 \big\} $$

Let   $x$     and   $y$    arbitrary points in  $S^N$   . By
$\gamma = \gamma(x, y) $ denote spherical distance between these
two points. In fact $\gamma$ is an angle between vectors  $x$ and
$y$ . It is clear that $\gamma \leq \pi$. By $B(x, r)$ denote a
ball on a sphere $S^N$ , with radius $r$ and with the center at a
point $x$ :
$$ B(x, r) = \big\{ y \in S^{N}: \gamma(x, y) \leq r \big\} $$

Let    $\Delta_s$   be Laplace-Beltrami operator on $S^N$. We have
following way to calculate operator  $\Delta_s$  , using Laplace's
operator $\Delta$ in  $R^{N+1}$    (see for instance in
\cite{shu1}.): let $f(x)$ a function determined on $S^N$    ;
extend it to $R^{N+1}$, by putting $\hat{f}(x) =
f\big{(}\frac{x}{|x|} \big{)} $, $x \in R^{N+1}$. Then $\Delta_s f
= \Delta \hat{f}\big{|}_{S^N}$. Another way of determination of
$\Delta_s$ is to represent Laplace operator $\Delta$ in $R^{N+1}$
by spherical coordinates. In this case it would be easy to
"separate" operator $ \Delta_s $  by separation angled
coordinates:
 $$ \Delta = \frac{\partial^2}{\partial{r^2}} + \frac{N}{r}
 \frac{\partial}{\partial{r}} + \frac{1}{r^2}\Delta_s ,$$
where operator $ \Delta_s $ can be written in spherical
coordinates $(\xi_1, \xi_2,...., \xi_{N-1}, \zeta)$ as:
\vspace{4mm}
$$ \Delta_s =
\frac{1}{\sin^{N-1}\xi_1}\frac{\partial}{\partial\xi_1}\Big{(}\sin^{N-1}{\xi_1}\frac{\partial}{\partial\xi_1}\Big{)}
+
\frac{1}{\sin^2\xi_1\sin^{N-2}\xi_2}\frac{\partial}{\partial\xi_2}\Big{(}\sin^{N-2}{\xi_2}\frac{\partial}{\partial\xi_2}\Big{)}
+ \ . \ . \ . \ . + $$

$$ + \frac{1}{\sin^2\xi_1 \sin^2\xi_2 \ . \
. \ . \sin^2\xi_{N-1}} \frac{\partial^2}{\partial\zeta^2} .$$

\vspace{4mm}

Operator  $ - \Delta_s$     as a formal differential operator with
domain of definition $C^{\infty}\big{(}S^N\big{)}$
 is a symmetric, non negative and its
closure $\overline{- \Delta_s}$ is a selfadjoint operator in
$L_2\big{(}S^N\big{)}.$   Eigenfunctions $Y^k$  of the operator  $
- \Delta_s$ , are called spherical harmonics. Spherical harmonics
of a degree $k$ and $\ell$ , $k \neq \ell$ \ \ are orthogonal
 . Corresponding eigenvalues are  $\lambda_k = k(k+N-1),$    where $k = 0, 1, 2, .
. .  .$ , and with frequency $a_k$ equal to the dimension of the
space of homogeneous harmonic polynomials of a degree $k$: \quad
$a_k = N_k - N_{k-2},$
     where   $N_k = \frac{(N + k)!}{N! k!}$    .
    That is why for each   $k$   there are   $a_k$    number of spherical
    harmonics
     $\big\{ Y^k_j \big\} \Big{|}^{a_k}_{j=1}$
corresponding to eigenvalue $\lambda_k$  . A family of functions
$\big\{ Y^k_j \big\} \Big{|}^{a_k}_{j=1}$ is an orthonormal  basis
in the space of spherical harmonics of a degree $k$ which we
denote by $\aleph_k$.

Note that an arbitrary function $f \in L_2\big{(}S^N\big{)}$ can
be represented in a unique way as Fourier series by spherical
harmonics $\big\{ Y^k_j \big\} \Big{|}^{a_k}_{j=1}$. Such a series
is called Fourier-Laplace series on sphere:

\begin{gather}\label{rce1}
f(x) = \sum_{k=0}^{\infty}  \sum_{j=1}^{a_k} f_{k, j }  Y_j^k(x),
 \end{gather}
 where  $f_{k,j}=\int_{S^N} f(y)Y_j^k(y) d\sigma(y)$ ,     and  equality (1.1)
 should be understanding in sense of $L_2\big{(}S^N\big{)}$ .

    Let denote by  $S_nf(x)$    a partial sum of series (1.1). It is clear that in   $S_nf(x)$ by
           changing order of integration and summation one can easily rewrite it as:
  $$S_nf(x) =\int_{S^N} f(y)\Theta(x, y, n) d\sigma(y),$$
where a function  $ \Theta(x, y, n) $    is a spectral function
(see in \cite{Ali} ) of a selfadjoint operator   $\overline{-
\Delta} $ and has a form:
\begin{gather}\label{rce1}
\Theta(x, y, n) = \sum_{k=0}^{n}  \sum_{j=1}^{a_k} Y_{j}^{k}(x)
Y_j^k(y),
 \end{gather}
 and \quad $S_nf(x)$ \quad is called a spectral expansion of an
 element \ $f$ \ correspondin to the operator $\overline{-
\Delta} $ (see in \cite{Ali} ).

\vspace{0.5 cm}
\section{Estimation of maximal operator and almost everywhere
convergence. } \vspace{0.2 cm}

 Determine Chezaro means of order    $\alpha $  of partial sums
of series   (1.1)  by equality
\begin{gather}\label{rce2}
S_n^{\alpha} f(x) = \frac{1}{A_{n}^{\alpha}} \sum_{k=0}^{n}  A_{n
- k}^{\alpha} \sum_{j=1}^{a_k} f_{k, j }  Y_j^k(x),
\end{gather}
where $A_{n}^{\alpha} = \frac{\Gamma(\alpha + m +
1)}{\Gamma(\alpha + 1)m!}.$
\begin{definition} \label{def2.1} \rm
Series  (1.1)  is sumable to $f(x)$   by Chezaro means of order
$\alpha$ if it is true that
\begin{gather}\label{rce2}
\lim_{n\rightarrow \infty}S_n^{\alpha} f(x) = f(x)
\end{gather}
\end{definition}
In the this definition equality  (2.2)  can be understood in any
sense (topology). Here in the present article we will consider it
in sense of almost every where convergence.

Note that Chezaro means of zero order is coincides with a partial
sum \quad $S_nf(x)$ \quad  and it is clear that \quad
$S^{\alpha}_nf(x)$ \quad is also can be represented as an integral
operator with a kernel which is Chezaro means of the spectral
function (1.2)
\begin{gather}\label{rce2}
\Theta^{\alpha} (x, y, n) = \frac{1}{A_{n}^{\alpha}}
\sum_{k=0}^{n} A_{n - k}^{\alpha} \sum_{j=1}^{a_k} Y_{j}^{k}(x)
Y_j^k(y),
\end{gather}
Thus formula   (2.1)  can be written as
\begin{gather}\label{rce2}
S_n^{\alpha}f(x) =\int_{S^N} f(y)\Theta^{\alpha}(x, y, n)
d\sigma(y).
\end{gather}
By $P_k^{\nu}(t)$   denote Gegenbaur's polinomials  (when $\nu =
\frac{1}{2}$ \quad Legender's polinomials) \cite{Kacz}, \cite{Sze}
. Let's put $\nu = \frac{N - 1}{2}$   .  A function $
\Theta^{\alpha} (x, y, n) $ can be represented in a form (see for
instance in \cite{kog1} ) :
\begin{gather}\label{rce2}
\Theta^{\alpha} (x, y, n) = \frac{\Gamma(n + 1)}{\Gamma(n + \alpha
+ 1)} \sum_{k = 0}^{n} \frac{\Gamma(n - k + \alpha + 1)}{\Gamma(n
- k + 1)}  \big{(} k + \nu \big{)}^{\nu} P_k^{\nu} (\cos\gamma).
\end{gather}

 In investigations of the convergence problems it is important to
obtain estimations of so called maximal operator
\begin{gather}\label{rce2}
S_{*}^{\alpha}f(x) = \sup_{n > 1} | S_n f(x) |.
\end{gather}
For any locally sumable on   $S^N$  function  $f(x)$   by
$f^{*}(x)$ denote Hardy-Littlwood's maximal function determined as
\begin{gather}\label{rce2}
f_{*}(x) = \sup_{r > 0} \frac{1}{mes B(x, r)}\int_{B(x, r)} | f(y)
| d\sigma(y).
\end{gather}
 In this section we prove that this function majors
operator determined by  (2.6). First we note here some well known
properties of Hardy-Littlwood's maximal function.

There exists a constant $c=c(N)$, depending only on dimension of
sphere such that for all $f \in L_1 \big{(} S^N \big{)}$ and
$F_{\mu} = \{x \in S^N: f^{*}(x) > \mu > 0 \}$ following
estimation is valid
\begin{gather}\label{rce2}
mes F_{\mu} \leq \frac{c \parallel f \parallel_1}{\mu}
\end{gather}
where  $\parallel f \parallel_1$    denotes a norm of \ $f$ \ in
$L_1 \big{(} S^N \big{)}$.

Moreover, there is a constant $b = b(p, N)$, that depends only on
dimension $N$ and on an index of summability $p
> 1$ such that
\begin{gather}\label{rce2}
\|f^{*}\|_{p} \leq b \|f\|_p.
\end{gather}
for all  $f \in L_p \big{(} S^N \big{)} $  .
    Estimation (2.8) is well known as Hardy-Littlwood's function
    has weak type of (1,1) (means it is weak bounded from   $L_1 \big{(} S^N
    \big{)}$
     to  $ L_1 \big{(} S^N \big{)} $) and (2.9)  says that it has strong (p, p) type when \ $p > 1$.

 Let's       denote by $\overline{x}$ a point that diametrically
opposite to a point $x$ of the sphere:  $\gamma(x, \overline{x}) =
\pi$   . In the present section we will prove following estimation
for maximal operator \ (2.6):
\begin{theorem} \label{Th.sol}
Let   $f(x)$    a summable  on a sphere function and let  $\alpha
> \frac{N - 1}{2}$ . Then there is a constant   $c_{\alpha}$
not depending on $f$ such that
 \begin{gather}\label{podmex3}
 S_{*}^{\alpha} f(x) \leq c_{\alpha} \big{(} f^{*}(x) + f^{*}(\overline{x})\big{)},
\end{gather}
where   $f^{*}$   is  Hardy-Littlewood's  maximal function.
\label{podmex1}
\end{theorem}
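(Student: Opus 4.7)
The plan is to establish a sharp pointwise estimate on the Cesàro kernel $\Theta^{\alpha}(x,y,n)$ from (2.5), and then combine it with a dyadic decomposition of $S^N$ around both $x$ and its antipode $\overline{x}$ to bound the integral against the Hardy--Littlewood maximal function. The appearance of $\overline{x}$ in the conclusion is dictated by the parity relation $P_k^{\nu}(-t)=(-1)^k P_k^{\nu}(t)$ for Gegenbauer polynomials, which forces the kernel in (2.5) to concentrate not only near $\gamma=0$ but also near $\gamma=\pi$.

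Starting from (2.5) and inserting the classical asymptotics of Gegenbauer polynomials from Szeg\H{o} \cite{Sze} --- namely $P_k^{\nu}(\cos\gamma)=O\bigl(k^{\nu-1/2}\gamma^{-\nu-1/2}\bigr)$ on compact subsets of $(0,\pi)$, together with the endpoint bound $|P_k^{\nu}(\cos\gamma)|\le C k^{2\nu-1}$ --- and applying summation by parts which exploits the regularity of the ratios $A_{n-k}^{\alpha}/A_{n}^{\alpha}$, I would derive the bound
$$|\Theta^{\alpha}(x,y,n)| \le C_{\alpha}\Bigl(\min\bigl(n^{N},\, n^{-(\alpha-\nu)}\gamma^{-(N+\alpha-\nu)}\bigr) + \min\bigl(n^{N},\, n^{-(\alpha-\nu)}(\pi-\gamma)^{-(N+\alpha-\nu)}\bigr)\Bigr),$$
valid for $\alpha>\nu=(N-1)/2$ and every $\gamma\in(0,\pi)$. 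Qualitatively this says that for $\alpha>\nu$ the Cesàro kernel is a good approximation of a double delta supported on $\{\gamma=0,\pi\}$, with the prescribed rate of decay away from these two points.

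Next I would decompose $S^N = B(x,1/n)\cup B(\overline{x},1/n)\cup\bigcup_{j\ge 0}A_{j}^{x}\cup\bigcup_{j\ge 0}A_{j}^{\overline{x}}$, where $A_{j}^{x}=B(x,2^{j+1}/n)\setminus B(x,2^{j}/n)$ (and analogously around $\overline{x}$), with $j$ running as long as the radius stays in $(0,\pi/2)$. On the polar cap $B(x,1/n)$, whose measure is $\sim n^{-N}$, the trivial kernel bound $|\Theta^{\alpha}|\le Cn^{N}$ gives an integral $\le Cf^{*}(x)$. On each annulus $A_{j}^{x}$, the singular part of the kernel estimate, combined with $\int_{B(x,2^{j+1}/n)}|f|\,d\sigma\le C(2^{j}/n)^{N}f^{*}(x)$, yields
$$\int_{A_{j}^{x}}|f(y)|\,|\Theta^{\alpha}(x,y,n)|\,d\sigma(y) \le C_{\alpha}\,n^{-(\alpha-\nu)}(2^{j}/n)^{-(N+\alpha-\nu)}(2^{j}/n)^{N}f^{*}(x) = C_{\alpha}\,2^{-j(\alpha-\nu)}f^{*}(x).$$
Summation over $j$ gives a geometric series that converges precisely when $\alpha>\nu=(N-1)/2$, producing a total contribution of $C_{\alpha}f^{*}(x)$. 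The symmetric analysis around $\overline{x}$ contributes $C_{\alpha}f^{*}(\overline{x})$, and taking the supremum in $n$ on the left-hand side yields (2.10).

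The principal obstacle is the pointwise kernel estimate: extracting the critical gain $n^{-(\alpha-\nu)}$ from (2.5) requires delicate use of Gegenbauer asymptotics at \emph{both} endpoints $\gamma=0,\pi$ and careful bookkeeping of the Cesàro weights via summation by parts; without this gain, the dyadic series does not close. Once the kernel estimate is in hand, the remaining dyadic integration is standard Calderón--Zygmund-type argument, and the hypothesis $\alpha>(N-1)/2$ enters exactly where the geometric series in $j$ must converge.
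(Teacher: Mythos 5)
Your proposal follows essentially the same route as the paper: both rest on Kogbetliantz-type pointwise estimates for the Ces\`aro kernel (concentration at $\gamma=0$ plus a secondary singularity at $\gamma=\pi$ forced by the Gegenbauer parity), a decomposition of the sphere into small caps and annular regions around $x$ and $\overline{x}$, and domination of each piece by the Hardy--Littlewood maximal function at $x$ or at $\overline{x}$, with the hypothesis $\alpha>\frac{N-1}{2}$ entering exactly where your geometric series in $j$ (the paper's integral $\int_{1/n}^{\pi/2} r^{\frac{N-3}{2}-\alpha}\,dr$ after integration by parts) must converge. The only cosmetic difference is that you sum over dyadic annuli where the paper integrates by parts in the radial variable against $\int_{\gamma<r}|f|\,d\sigma$; the substance is identical.
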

\begin{proof}
For the kernel   (2.5) following estimations are valid
\cite{kog1}:

if $\alpha > -1 $  and $|\frac{\pi}{2} - \gamma| \leq \frac{n}{n +
1} \frac{\pi}{2}$ , then for $n \rightarrow \infty$

$$ \Theta^{\alpha}(x, y, n) = \frac{\Gamma(\alpha + 1)}{\Gamma(n +
\alpha + 1)} \frac{\Gamma \Big{(} n + \frac{N +
1}{2}\Big{)}}{\Gamma \Big{(}\frac{N + 1}{2} \Big{)}}\times
\frac{\sin\Big{(} \Big{(} n + \frac{N + 1}{2}\Big{)}\gamma -
 \Big{(} \frac{N - 1}{2} + \frac{\alpha}{2}\Big{)}\frac{\pi}{2}
 \Big{)}}{\Big{(}2 \sin \gamma \Big{)}^{\frac{N - 1}{2}} \Big{(}2 \sin \frac{\gamma}{2} \Big{)}^{1 + \alpha}}
 + $$

\begin{gather}\label{rce2}
 + \frac{O \Big{(}n^{\frac{N - 1}{2} - \alpha - 1} \Big{)}}
{\Big{(} \sin \gamma \Big{)}^{\frac{N + 1}{2}} \Big{(} \sin
\frac{\gamma}{2} \Big{)}^{1 + \alpha}}
  + \frac{O \Big{(}\frac{1}{n} \Big{)}
 }{\Big{(} \sin \frac{\gamma}{2} \Big{)}^{1 + N}}
\end{gather}
if $\alpha > -1 $  and $0 < \gamma_0 \leq \gamma \leq \pi$ , then
for $n > 1$
\begin{gather}\label{rce2}
|\Theta^{\alpha}(x, y, n)| \leq c \ n^{N - 1 - \alpha}
 \end{gather}
if $\alpha > -1 $  and $0 \leq \gamma \leq \pi$ , then for $n
> 1$
\begin{gather}\label{rce2}
|\Theta^{\alpha}(x, y, n)| \leq c \ n^{N}
 \end{gather}

We will use these asymptotic formulas for estimation of Chezaro
means \ (2.4) . For that we transform integral in the right side
of (2.4) to the following form:

$$\int_{\gamma \leq \frac{1}{n}} \ + \ \int_{\frac{1}{n} \leq \gamma \leq \frac{\pi}{2}} \ + \
\int_{\frac{\pi}{2} \leq \gamma \leq \pi - \frac{1}{n}} \ + \int_{
\frac{1}{n} \leq \gamma \leq \pi }  $$ Then using correspondingly
formulas (2.11)-(2.13), obtain

$$ | S_n^{\alpha} f(x) | \ \leq \ c_1 \ n^N \ \int_{\gamma \leq
\frac{1}{n}} \ | f(y) | d \sigma(y) \ + \ c_2 \ n^{\frac{N - 1}{2}
- \alpha} \ \int_{\frac{1}{n} \leq \gamma \leq \frac{\pi}{2}} \
\frac{| f(y) |} {\Big{(}\sin \gamma \Big{)}^{ \frac{N + 1}{2}+
\alpha}} d \sigma(y) \ + $$ $$+ \ c_3 \ n^{\frac{N - 3}{2} -
\alpha} \ \int_{\frac{1}{n} \leq \gamma \leq \frac{\pi}{2}} \
  \frac{|
f(y) |} {\Big{(}\sin \gamma \Big{)}^{ \frac{N + 3}{2} + \alpha}} d
\sigma(y) +   \  c_4 \ n^{-1}\ \int_{\frac{1}{n} \leq \gamma \leq
\frac{\pi}{2}} \
  \frac{|
f(y) |} {\Big{(}\sin \gamma \Big{)}^{ N + 1}} d \sigma(y) + $$ $$
+ \ \ c_5 \ n^{\frac{N - 1}{2} - \alpha} \ \int_{\frac{\pi}{2}
\leq \gamma \leq \pi - \frac{1}{n}} \  \frac{| f(y) |}
{\Big{(}\sin \gamma \Big{)}^{ \frac{N - 1}{2}+ \alpha}} d
\sigma(y) \ +  \ c_6 \ n^{\frac{N - 3}{2} - \alpha} \
\int_{\frac{\pi}{2} \leq \gamma \leq \pi - \frac{1}{n}} \  \frac{|
f(y) |} {\Big{(}\sin \gamma \Big{)}^{ \frac{N + 1}{2}+ \alpha}} d
\sigma(y) \ + $$ \begin{gather}\label{rce2} + \ c_7 \ n^{-1} \
\int_{\frac{\pi}{2} \leq \gamma \leq \pi - \frac{1}{n}} \ | f(y) |
d \sigma(y) \ + \ c_8 \ n^{N} \  \int_{ \frac{1}{n} \leq \gamma
\leq \pi } \ | f(y) | d \sigma(y)
 \end{gather}
 Further we continue to estimate through Hardy-Littlwood's maximal function.

$$ | S_n^{\alpha} f(x) | \ \leq \ c_0 \ \Big{\{} \ n^N \ \int_{\gamma \leq
\frac{1}{n}} \  | f(y) | d \sigma(y) \ + $$

 $$ + \ n^{\frac{N -1}{2} - \alpha} \ \int_{\frac{1}{n}}^{\frac{\pi}{2}} \
 \frac{1}{\Big{(} \sin \gamma \Big{)}^{ \frac{N + 3}{2} +
\alpha}} \ d \ \int_{ \gamma < r } \ | f(y) | \ d \sigma(y) \ +
$$
$$ + \ n^{\frac{N -3}{2} - \alpha} \ \int_{\frac{1}{n}}^{\frac{\pi}{2}} \
 \frac{1}{\Big{(} \sin \gamma \Big{)}^{ \frac{N + 3}{2} +
\alpha}} \ d \ \int_{ \gamma < r } \ | f(y) | \ d \sigma(y) \ +
$$
$$ + \ \frac{1}{n} \ \int_{\frac{1}{n}}^{\frac{\pi}{2}} \
 \frac{1}{\Big{(} \sin \gamma \Big{)}^{ N + 1}} \ d \ \int_{ \gamma < r } \ | f(y) | \ d \sigma(y) \ +
$$
 $$ + \ n^{\frac{N -1}{2} - \alpha} \ \int_{\frac{1}{n}}^{\frac{\pi}{2}} \
 \frac{1}{\Big{(} \sin \gamma \Big{)}^{ \frac{N - 1}{2} }} \ d \ \int_{ \gamma < r } \ | f(y) | \ d \sigma(y)
\ +
$$
$$ + \ n^{\frac{N -3}{2} - \alpha} \ \int_{\frac{1}{n}}^{\frac{\pi}{2}} \
 \frac{1}{\Big{(} \sin \gamma \Big{)}^{ \frac{N + 1}{2}}} \ d \ \int_{ \gamma < r } \ | f(y) | \ d \sigma(y) \ +
$$
\begin{gather}\label{rce2} + \ \frac{1}{n} \
\int_{\frac{1}{n}}^{\frac{\pi}{2}} \
 \ d \ \int_{ \gamma < r } \ | f(y) | \ d \sigma(y) \ \Big{\}}
\end{gather}

By replacing with maximal function we obtain
$$ | S_n^{\alpha} f(x) | \ \leq \ c_0 \ \Big{\{} \ \Big{(} 1 + \ n^{\frac{N -1}{2} - \alpha} \ \int_{\frac{1}{n}}^{\frac{\pi}{2}} \
 r^{ \frac{N - 3}{2} - \alpha} dr \ + \ \frac{1}{n} \ \int_{\frac{1}{n}}^{\frac{\pi}{2}} \
 \frac{dr}{r^2}\Big{)} \ f^{*}(x) \ + $$
$$
 + \ \Big{(} 1 + \ n^{\frac{N -1}{2} - \alpha} \
\int_{\frac{1}{n}}^{\frac{\pi}{2}} \
 r^{ \frac{N - 1}{2}} dr \ + \ \frac{1}{n} \ \int_{\frac{1}{n}}^{\frac{\pi}{2}} \
 dr^N \Big{)} \ f^{*}(\overline{x}) \ \ \Big{\}}
$$

Taking into consideration that   $\alpha > \frac{N - 1}{2}$ , \
one can obtain that all integrals in last inequality finite. Then
using definition of maximal function we obtain estimation (2.10).
Theorem  2.2  is proved
\end{proof}

Note that theorem 2.2. specified obtained earlier estimation of
maximal operator.\footnote{Estimation \ (2.10) \ specifies a
similar estimation obtained by A. Bonami , J. Clerc \cite{BonCle1}
and such a specification was required due to necessity to estimate
Chezaro means of critical exponent (see in section 3 below). For
the first time estimation \ (2.10) \ was obtained by the author of
the present paper in \cite{Rakh1} (see also in \cite{Rakh2})} From
the theorem 2.2 it is easy to obtain following corollaries.

\begin{corollary}\label{cor}
Let   $f \in L_1 \big{(} S^N \big{)}$  .  If    $\alpha > \frac{N
- 1}{2}$ , then  for maximal operator (2.6) following estimation
if valid
\begin{gather}\label{rce2}
 mes \big{\{} x \in \big{(} S^N \big{)} : S^{\alpha}_{*} f(x) >
 \mu > 0 \big{\}} \ \leq \ \frac{c \ \|f\|_1}{\mu}
\end{gather}
\end{corollary}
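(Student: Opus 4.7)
The plan is to deduce the weak $(1,1)$ bound for $S^{\alpha}_{*}$ directly from Theorem 2.2 combined with the known weak $(1,1)$ estimate (2.8) for the Hardy--Littlewood maximal function $f^{*}$.

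First I would use the pointwise majorization (2.10) from Theorem 2.2, namely
$$S^{\alpha}_{*}f(x) \leq c_{\alpha}\bigl(f^{*}(x) + f^{*}(\overline{x})\bigr),$$
to write the level set inclusion
$$\{x \in S^{N} : S^{\alpha}_{*}f(x) > \mu\} \subset \Bigl\{x : f^{*}(x) > \tfrac{\mu}{2c_{\alpha}}\Bigr\} \cup \Bigl\{x : f^{*}(\overline{x}) > \tfrac{\mu}{2c_{\alpha}}\Bigr\}.$$
This reduces the corollary to estimating the measure of each of these two sets by a constant times $\|f\|_{1}/\mu$.

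Next I would bound the measure of the first set directly by the weak type $(1,1)$ inequality (2.8), which yields
$$\mathrm{mes}\Bigl\{x : f^{*}(x) > \tfrac{\mu}{2c_{\alpha}}\Bigr\} \leq \frac{2 c\, c_{\alpha}\,\|f\|_{1}}{\mu}.$$
For the second set, the key observation is that the antipodal map $x \mapsto \overline{x}$ is an isometry of $S^{N}$ and in particular preserves the surface measure $d\sigma$. Hence
$$\mathrm{mes}\Bigl\{x : f^{*}(\overline{x}) > \tfrac{\mu}{2c_{\alpha}}\Bigr\} = \mathrm{mes}\Bigl\{x : f^{*}(x) > \tfrac{\mu}{2c_{\alpha}}\Bigr\},$$
so the same bound applies and adding the two contributions gives (2.16) with a constant $c = 4c\,c_{\alpha}$ (absorbed into a single $c$).

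There is essentially no obstacle here: the entire work has been done in Theorem 2.2, where the delicate splitting of the Cesàro kernel and the integration-by-parts replacement by $f^{*}$ were carried out. The corollary is a soft consequence combining that pointwise bound with the classical Hardy--Littlewood weak type inequality and the symmetry of $S^{N}$ under the antipodal map.
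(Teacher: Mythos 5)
Your proposal is correct and follows exactly the route the paper takes: the paper's own proof simply states that (2.16) "immediately follows from estimation (2.10) and inequality (2.8)," and your argument supplies the standard details (splitting the level set, applying the weak $(1,1)$ bound to each piece, and using that the antipodal map preserves surface measure to handle the $f^{*}(\overline{x})$ term). Nothing is missing.
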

\begin{proof}    Inequality (2.16) immediately follows from
estimation (2.10) and inequality  (2.8). Corollary is proved.
\end{proof}
\begin{corollary}\label{cor} Let  $f \in L_1 \big{(} S^N \big{)}$  and     $\alpha > \frac{N
- 1}{2}$   .  Then almost everywhere on sphere
\begin{gather}\label{rce2}
 \lim_{n \rightarrow \infty} S^{\alpha}_{n} f(x) = f(x).
\end{gather}
 \end{corollary}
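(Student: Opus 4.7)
The plan is the classical deduction of almost-everywhere convergence from a weak-type maximal inequality combined with convergence on a dense subclass. The heavy analytic lifting is already done in Theorem 2.2 and Corollary 2.3; what remains is the standard two-step argument.

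First I would identify a dense subset of $L_1(S^N)$ on which $S_n^\alpha$ converges in an essentially trivial way. The natural choice is the set $\mathcal{P}$ of finite linear combinations of the spherical harmonics $\{Y_j^k\}$; restrictions of polynomials from $R^{N+1}$ to $S^N$ lie in $\mathcal{P}$, so by Stone--Weierstrass $\mathcal{P}$ is dense in $C(S^N)$ and hence in $L_1(S^N)$. For $P = \sum_{k=0}^K \sum_j c_{k,j} Y_j^k \in \mathcal{P}$ and $n \ge K$, formula (2.1) gives
\begin{equation*}
S_n^{\alpha} P(x) = \sum_{k=0}^{K} \frac{A_{n-k}^{\alpha}}{A_n^{\alpha}} \sum_{j=1}^{a_k} c_{k,j} Y_j^k(x),
\end{equation*}
and since $A_{n-k}^{\alpha}/A_n^{\alpha} \to 1$ as $n \to \infty$ for each fixed $k$, one obtains $S_n^\alpha P \to P$ uniformly on $S^N$.

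Next, given $f \in L_1(S^N)$ and $\varepsilon > 0$, I would pick $P \in \mathcal{P}$ with $\|f-P\|_1 < \varepsilon$ and split
\begin{equation*}
|S_n^{\alpha} f(x) - f(x)| \le |S_n^{\alpha}(f-P)(x)| + |S_n^{\alpha} P(x) - P(x)| + |P(x) - f(x)|.
\end{equation*}
The middle term vanishes uniformly by the previous step, so
\begin{equation*}
\limsup_{n\to\infty} |S_n^{\alpha} f(x) - f(x)| \le S_{*}^{\alpha}(f-P)(x) + |f(x) - P(x)|.
\end{equation*}
Hence for every $\mu > 0$,
\begin{equation*}
\{x \in S^N : \limsup_n |S_n^{\alpha} f - f| > \mu\} \subset \{S_{*}^{\alpha}(f-P) > \tfrac{\mu}{2}\} \cup \{|f - P| > \tfrac{\mu}{2}\},
\end{equation*}
and by Corollary 2.3 applied to $f-P$, together with Chebyshev's inequality, the measure of the right-hand side is bounded by $C\varepsilon/\mu$ with $C$ independent of $\varepsilon$.

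Since $\varepsilon$ is arbitrary, the exceptional set at level $\mu$ has measure zero, and taking the union over $\mu = 1/k$ yields (2.17). There is no real obstacle here: the only point that genuinely requires the hypothesis $\alpha > (N-1)/2$ is the application of Corollary 2.3, which in turn depends on the pointwise majorization (2.10) proved in Theorem 2.2. The uniform convergence on the dense class $\mathcal{P}$ is an immediate consequence of the asymptotics $A_{n-k}^\alpha/A_n^\alpha \to 1$.
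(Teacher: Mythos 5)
Your proposal is correct and follows the same route the paper intends: the paper's proof of this corollary is the single assertion that (2.17) follows from the weak-type bound (2.16), and your density argument (uniform convergence of $S_n^\alpha$ on finite linear combinations of spherical harmonics via $A_{n-k}^\alpha/A_n^\alpha \to 1$, plus the three-term splitting controlled by the maximal operator) is precisely the standard deduction being invoked there. You have simply made explicit the steps the paper leaves to the reader.
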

\begin{proof} Equality (2.17) follows from estimation (2.16) .
Corollary is proved.
\end{proof}
Using interpolation theorem of E. Stein \cite{SteWei1} and
estimation for Hardy-Littlwood maximal function (2.9)  from the
theorem 2.2 we obtain (see: \cite{BonCle1} )
\begin{corollary}\label{cor}
Let  $f \in L_p \big{(} S^N  \big{)} , \ p > 1 $ \  and  \ $\alpha
> (N - 1) \big{(} \frac{1}{p} - \frac{1}{2} \big{)}$ .
Then
\begin{itemize}
\item[$(i)$]  For maximal operator   (2.6)  following inequality is true
\begin{gather}\label{rce2}
 \| S^{\alpha}_{*} f \|_p \ \leq \ c \| f \|_p
\end{gather}
\item[$(ii)$]  Almost everywhere on sphere we have following equality
\begin{gather}\label{rce2}
 \lim_{n \rightarrow \infty} S^{\alpha}_{n} f(x) = f(x).
\end{gather}
\end{itemize}
\end{corollary}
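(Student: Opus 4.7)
My plan is to combine Theorem 2.2 with the Hardy--Littlewood maximal inequality (2.9) to get one easy endpoint, and then to invoke E.~M.~Stein's analytic interpolation theorem to sharpen the range of admissible $\alpha$.

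First I would note that Theorem 2.2 supplies the pointwise bound $S^\alpha_* f(x) \leq c_\alpha(f^*(x) + f^*(\bar x))$ whenever $\alpha > (N-1)/2$. Since the antipodal map $x \mapsto \bar x$ preserves $d\sigma$, applying (2.9) to each summand yields $\|S^\alpha_* f\|_p \leq C_\alpha \|f\|_p$ for every $p > 1$, as long as $\alpha > (N-1)/2$. This already gives (i) on the coarse range $\alpha > (N-1)/2$; to reach the sharp range $\alpha > (N-1)(1/p - 1/2)$ one has to interpolate.

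To that end I would introduce a complex-analytic family $\{T^z\}$, indexed by a complex parameter $z$, obtained by linearising the supremum in (2.6): pick a measurable selector $n \colon S^N \to \{0,1,2,\dots\}$ and set $T^z f(x) = e^{z^2} S^z_{n(x)} f(x) / \Gamma(z+1)$, the gauge factor $e^{z^2}/\Gamma(z+1)$ being inserted to force admissibility in Stein's sense. On the vertical line $\operatorname{Re} z = (N-1)/2 + \varepsilon$, Corollary 2.3 supplies a weak type $(1,1)$ bound, uniform in $\operatorname{Im} z$ up to the polynomial growth allowed by admissibility; on the line $\operatorname{Re} z = \varepsilon > 0$, orthogonality of the spherical harmonics together with the Kaczmarz--Zygmund theorem on Ces\`aro maximal operators of positive order for orthogonal series yields a strong type $(2,2)$ bound. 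Stein's theorem then produces a strong $(p,p)$ bound at every intermediate point, with critical exponent
\[
\alpha_\theta = (1-\theta)\,\tfrac{N-1}{2} + \theta\cdot 0 = (N-1)\bigl(\tfrac{1}{p} - \tfrac{1}{2}\bigr),
\]
where $\theta \in (0,1)$ is chosen so that $1/p = (1-\theta) + \theta/2$. Sending $\varepsilon \downarrow 0$ settles (i) for $p \in (1,2]$; the range $p > 2$ then follows by duality or by interpolating the $L^2$ endpoint against the trivial $L^\infty$ bound. Part (ii) is then a direct consequence of (i) via the standard Banach continuity principle: $S^\alpha_n P \to P$ is immediate (in fact uniform) for every $P$ in the dense subspace of finite linear combinations of spherical harmonics, and the maximal inequality from (i) forces the exceptional set to have zero measure for arbitrary $f \in L_p(S^N)$.

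The main obstacle I anticipate is the careful set-up of the analytic family: one must verify that the gauge $e^{z^2}/\Gamma(z+1)$ tames the growth of $\Gamma(z+n+1)/\Gamma(n+1)$ in $\operatorname{Im} z$ uniformly in $n$, and that the measurable selector $n(x)$ does not destroy analyticity in $z$. Once these technicalities are in hand, Stein's theorem delivers (i) essentially automatically, and (ii) is then routine.
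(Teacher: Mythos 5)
Your overall strategy --- the coarse range $\alpha>\frac{N-1}{2}$ for all $p>1$ obtained from Theorem 2.2 together with (2.9), then sharpened by Stein's analytic interpolation applied to a linearized maximal operator with an $L^2$ endpoint coming from orthogonality --- is exactly the route the paper intends: its entire proof consists of the remark that the corollary follows from Theorem 2.2, estimate (2.9) and Stein's interpolation theorem, with a pointer to Bonami--Clerc. Your interpolation arithmetic $\alpha_\theta=(1-\theta)\frac{N-1}{2}=(N-1)\big(\frac1p-\frac12\big)$ for $\frac1p=1-\frac{\theta}{2}$ is correct, and deducing (ii) from (i) via density of finite linear combinations of spherical harmonics is the standard and correct argument.

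Two steps in your write-up would fail as stated. First, Stein's interpolation theorem for analytic families requires \emph{strong}-type bounds on the boundary lines, so you cannot feed it the weak $(1,1)$ estimate of Corollary 2.3 directly; either invoke the weak-type variant of the theorem, or (simpler, and sufficient because the target range of $\alpha$ is open) take as the right-hand endpoint the strong $(p_0,p_0)$ bound for $p_0>1$ close to $1$, which Theorem 2.2 combined with (2.9) already gives for $\mathrm{Re}\,z>\frac{N-1}{2}$, and then let $p_0\downarrow 1$. Second, the case $p>2$ cannot be obtained ``by duality'': $S^{\alpha}_{*}$ is sublinear, not linear, and has no adjoint acting on the dual space; nor is the $L^\infty$ bound for the maximal operator ``trivial'' --- uniform $L^1$ bounds for the kernel $\Theta^{\alpha}(x,\cdot,n)$, hence $L^\infty\to L^\infty$ boundedness of $S^{\alpha}_{*}$, again require $\alpha\geq\frac{N-1}{2}$, and interpolating that endpoint against $L^2$ yields the threshold $(N-1)\big(\frac12-\frac1p\big)>0$ rather than the (negative) number appearing in the statement for $p>2$. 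For part (ii) with $p\geq 2$ this is harmless, since $L_p(S^N)\subset L_2(S^N)$ on the finite measure space $S^N$ and the $L^2$ case already gives almost everywhere convergence for every $\alpha>0$; but part (i) for $p>2$ does not follow from your argument as written and needs either a separate treatment or the reading $\alpha>\max\big\{0,(N-1)\big(\frac1p-\frac12\big)\big\}$.
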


\vspace{0.2 cm}
\section{Generalized localization of Chezaro means of order  \ $\frac{N - 1}{2}$ \   of
Fourier-Laplace series of functions from  \ $L_1$  .} \vspace{0.2
cm}

\begin{definition} \label{def1.1}   Let \ $V$ \    a subdomain of a sphere, it can also
coincides with  \ $  S^N  $. We say that for Chezaro means \
$S^{\alpha}_{n} f(x)$ \ it is true generalized principles of
localization in a class of functions  \ $ L_p \big{(}S^N \big{)}
$, if for an arbitrary function \ $f$ \ form \ $ L_p \big{(}S^N
\big{)} $ such  that \ $f(x) = 0 $ \ when \ $x \in V$,\ almost
everywhere on \ $V$
$$ \lim_{n \rightarrow \infty} S^{\alpha}_{n} f(x) = 0. $$
\end{definition}
Main result of the present section is a following theorem.

\begin{theorem} \label{Th.sol}
Let   \ $ f \in L_1 \big{(}S^N \big{)} $       and let  \ $f(x) =
0 $ , when \ $x \in V \subset S^N$ . Then almost everywhere on \
$V$ \ Fourier-Laplace series of a function \ $f$ \ convergence to
zero by Chezaro means of the order \ $\alpha = \frac{N - 1}{2}$  .
\end{theorem}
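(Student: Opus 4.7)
The plan is to exploit the hypothesis $f \equiv 0$ on $V$ so that the singularity of the Cesàro kernel at $\gamma = 0$ is removed for any interior $x \in V$; only after this reduction can one pass from the maximal estimate of Theorem 2.2, which required $\alpha > (N-1)/2$, to the critical index $\alpha = (N-1)/2$ by direct pointwise analysis. Fix $x$ in the interior of $V$ with $r_0 := \operatorname{dist}(x, S^N \setminus V) > 0$ and $f^{*}(\overline{x}) < \infty$; by (2.8) the latter condition fails only on a null set, so it suffices to establish $\lim_{n \to \infty} S_n^{(N-1)/2} f(x) = 0$ for every such $x$. Since $f(y) = 0$ whenever $\gamma(x,y) < r_0$, formula (2.4) becomes
\begin{equation*}
S_n^{(N-1)/2} f(x) = \int_{r_0 \leq \gamma(x,y) \leq \pi} f(y) \, \Theta^{(N-1)/2}(x,y,n) \, d\sigma(y),
\end{equation*}
and I would split this integral at $\gamma = \pi - 1/n$.

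On the principal interval $[r_0, \pi - 1/n]$ the asymptotic (2.11) applies, and at $\alpha = (N-1)/2$ the gamma-function prefactor in (2.11) collapses to exactly $1$; the leading term is therefore the pure oscillation
\begin{equation*}
K_n(\gamma) = \frac{\sin\bigl((n + \tfrac{N+1}{2})\gamma - c_N\bigr)}{(2\sin\gamma)^{(N-1)/2}(2\sin(\gamma/2))^{(N+1)/2}},
\end{equation*}
while each of the two remainders in (2.11) carries an explicit factor $n^{-1}$. Passing to polar coordinates centered at $x$ and writing $F(\gamma)$ for the spherical mean of $f$ over the geodesic sphere of radius $\gamma$ about $x$, the oscillatory contribution reduces to a one-variable integral
\begin{equation*}
\int_{r_0}^{\pi - 1/n} \sin\bigl((n + \tfrac{N+1}{2})\gamma - c_N\bigr) \, g(\gamma) \, d\gamma,
\end{equation*}
with weight $g(\gamma) = (2\sin\gamma)^{-(N-1)/2}(2\sin(\gamma/2))^{-(N+1)/2} F(\gamma) \sin^{N-1}\gamma$. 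Because $r_0 > 0$, $f \in L_1$, and near $\gamma = \pi$ the singular factor $(\sin\gamma)^{-(N-1)/2}$ is absorbed by $\sin^{N-1}\gamma$, the weight $g$ belongs to $L^1([r_0, \pi])$; the Riemann-Lebesgue lemma then drives this integral to $0$ as $n \to \infty$. The two remainders in (2.11) are dominated on the principal interval by constants depending only on $r_0$, so their contributions are $O(n^{-1}\|f\|_1) \to 0$.

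For the residual belt $\gamma(x,y) \in [\pi - 1/n, \pi]$, which coincides with $B(\overline x, 1/n)$, I would apply (2.12) with $\gamma_0 = \pi/2$ to obtain $|\Theta^{(N-1)/2}(x,y,n)| \leq c \, n^{(N-1)/2}$, and combine it with the Hardy-Littlewood bound $\int_{B(\overline x, 1/n)} |f| \, d\sigma \leq c' n^{-N} f^{*}(\overline x)$ to bound the belt's contribution by $c'' n^{-(N+1)/2} f^{*}(\overline x) \to 0$. Summing the three pieces yields $S_n^{(N-1)/2} f(x) \to 0$ at almost every $x \in V$, which is the theorem. The main obstacle is the oscillatory principal term at the critical index: for $\alpha > (N-1)/2$ the kernel is uniformly absolutely integrable and Theorem 2.2 applies directly, whereas at $\alpha = (N-1)/2$ absolute integrability is lost and only cancellation survives; the reduction to a one-dimensional oscillatory integral with an $L^1$ weight is the decisive step that makes Riemann-Lebesgue available.
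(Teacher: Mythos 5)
Your architecture is genuinely different from the paper's: the paper proves a pointwise maximal bound $S^{\alpha}_{*}f(x)\leq c_{\alpha}f^{*}(\overline{x})$ on $V_1$ valid at the critical index (Lemma 3.3), converts it into a weak $(1,1)$ estimate on $V_1$ (Lemma 3.4), and then deduces a.e.\ convergence, whereas you aim for direct pointwise convergence at every interior $x$ with $f^{*}(\overline{x})<\infty$ by exploiting the oscillation of the main term through Riemann--Lebesgue. Your observation that the gamma prefactor in (2.11) equals $1$ exactly at $\alpha=\frac{N-1}{2}$ is correct, your treatment of the main term (absorbing $(\sin\gamma)^{-\frac{N-1}{2}}$ into the Jacobian $\sin^{N-1}\gamma$ so that the one-dimensional weight is in $L^1$) is sound, and your antipodal belt estimate $n^{\frac{N-1}{2}}\cdot n^{-N}f^{*}(\overline{x})\to 0$ matches the paper's. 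If completed, your route would even bypass the density argument that the paper's final step tacitly relies on.

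However, there is a genuine gap in your handling of the first remainder of (2.11). At $\alpha=\frac{N-1}{2}$ that term is $O(n^{-1})\,(\sin\gamma)^{-\frac{N+1}{2}}(\sin\frac{\gamma}{2})^{-\frac{N+1}{2}}$, and it is \emph{not} ``dominated on the principal interval by constants depending only on $r_0$'': while $\sin\frac{\gamma}{2}$ is bounded below by $\sin\frac{r_0}{2}$ there, $\sin\gamma$ vanishes at the antipode, so on $[r_0,\pi-\frac{1}{n}]$ the factor $(\sin\gamma)^{-\frac{N+1}{2}}$ grows up to size $n^{\frac{N+1}{2}}$. The crude bound you invoke therefore gives $O(n^{\frac{N-1}{2}}\|f\|_1)$, which diverges, not $O(n^{-1}\|f\|_1)$; nor does the Jacobian save you, since $\sin^{N-1}\gamma\cdot(\sin\gamma)^{-\frac{N+1}{2}}=(\sin\gamma)^{\frac{N-3}{2}}$ multiplies a density that is merely $L^1$ near $\gamma=\pi$, and an $L^1$ mass concentrated near $\gamma=\pi-\frac{1}{n}$ defeats the estimate. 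The repair is exactly the device you already use for the belt and which is the core of the paper's Lemma 3.3: write the integral in terms of $\Phi(r)=\int_{\gamma(\overline{x},y)<r}|f|\,d\sigma\leq C r^{N}f^{*}(\overline{x})$ and integrate by parts, which turns this remainder into $O(n^{-1}f^{*}(\overline{x}))\to 0$ at every point where $f^{*}(\overline{x})<\infty$. With that correction (and cutting the principal interval at $\pi-\frac{\pi}{2(n+1)}$, the actual upper limit of validity of (2.11)), your argument goes through.
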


From theorem 3.2. it follows that principles of generalized
localization for the Fourier-Laplace series on a sphere is valid
in critical index \ $\alpha = \frac{N - 1}{2}$  of Chezaro means
in a class of functions   \ $L_1$. In order to prove the theorem
we should estimate maximal operator \ (2.6) \ with critical
exponent.\footnote{Exponent  \quad $\frac{N - 1}{2}$ \quad was
termed by Bochner \cite{Boc} as the critical exponent. It was, in
particular, justified by the fact that localization principal
holds for spherical partial Fourier integrals with the critical
(and above) exponent, and but not for exponent below it. However,
for Fourier multiple series (as well as for Fourier-Laplace series
on sphere) with  spherical sums localization principle (in usual
sense) is not correct in critical exponent \cite{SteWei1}.
Localization of the critical exponent for the Chezaro means of
spherical expansions of distributions are studied in  \cite{Rakh3}
- \cite{Rakh5}. Meantime a localization of the partial sums (means
of zero exponent) in class \quad $L_2$ \quad is another critical
case (Luzin's problem, see \cite{Ali}). This problem was studied
by Bastis I.J. in \cite{Bas} and  Meaney C. in \cite{Mean}.}

\begin{lemma}\label{lemma1}
 Let   \ $f(x)$ \  a summable function on a sphere and let denotes
   \ $\overline{x}$ \  diametrically opposite point to \ $x$
 and let  \ $\alpha \geq \frac{N - 1}{2}$  . If    \ \  $f(x) =
0 , $ \ when $ \ x \in V \subset S^N$,  \ then there exists a
constant   \ $c_{\alpha}$ \   not depending on \ $f$ \
        such that at any point of the domain   \ $V_1 \subset V$   ,
         where \ $\gamma(V_1, \partial V) > 0$  , following inequality is valid
\begin{gather}\label{rce2}
 S^{\alpha}_{*} f(x) \leq c_{\alpha} \ f^{*}(\overline{x}).
\end{gather}
here \ \ $f^{*}$   \ \ is  Hardy-Littlwood's maximal function,  \
$\partial V$ \ is a spherical boundary of the domain \ $V$   .
\end{lemma}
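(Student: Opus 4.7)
The plan is to reuse the integral decomposition from the proof of Theorem 2.2, exploiting the hypothesis $f \equiv 0$ on $V$ to suppress exactly those contributions that produced the $f^{*}(x)$ term in (2.10). Fix $x \in V_1$ and set $\delta = \gamma(V_1, \partial V) > 0$. Then $B(x, \delta) \subset V$, so $f(y) = 0$ whenever $\gamma(x, y) \leq \delta$. For the finitely many $n < 1/\delta$ the crude bound (2.13) together with the trivial inequality $\|f\|_1 \leq |S^N|\, f^{*}(\overline{x})$ (apply the definition of $f^{*}$ to the ball $B(\overline{x}, \pi) = S^N$) already yields an estimate of the desired form. I may therefore assume $n \geq 1/\delta$ and split the kernel integral (2.4) as
$$S_n^{\alpha} f(x) = \int_{\delta \leq \gamma \leq \pi/2} + \int_{\pi/2 \leq \gamma \leq \pi - 1/n} + \int_{\pi - 1/n \leq \gamma \leq \pi}.$$

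On the first piece, $\sin\gamma$ and $\sin(\gamma/2)$ are bounded below by constants depending only on $\delta$; combining the asymptotic (2.11) with the intermediate estimate (2.12) and using the hypothesis $\alpha \geq (N-1)/2$ (so that $n^{(N-1)/2-\alpha} \leq 1$) gives the uniform pointwise bound $|\Theta^{\alpha}(x,y,n)| \leq C_{\delta,\alpha}$. Hence the first integral is bounded by $C_{\delta,\alpha} \|f\|_1 \leq C'_{\delta,\alpha}\, f^{*}(\overline{x})$. For the third piece, the cap $\{\gamma \geq \pi - 1/n\}$ coincides with $B(\overline{x}, 1/n)$, whose measure is comparable to $n^{-N}$, so the bound $|\Theta^{\alpha}| \leq c\, n^N$ from (2.13) and the definition of $f^{*}$ directly give a bound by $c\, f^{*}(\overline{x})$.

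The second piece, on $\pi/2 \leq \gamma \leq \pi - 1/n$, is handled by running the same Stieltjes-integration step as in the proof of Theorem 2.2 in the variable $r = \pi - \gamma = \gamma(\overline{x}, y)$; the three terms arising from the asymptotic (2.11) reproduce (with upper limit $\pi/2$ and lower limit $1/n$) precisely the $f^{*}(\overline{x})$-bounds computed there, and with $\alpha \geq (N-1)/2$ the corresponding radial integrals remain uniformly finite in $n$.

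The main obstacle I anticipate is the critical exponent $\alpha = (N-1)/2$, for which the radial integrals coupled to $f^{*}(x)$ in Theorem 2.2 become logarithmically divergent (for instance $\int_{1/n}^{\pi/2} r^{-1}\,dr = \log(n\pi/2)$); this is exactly why that theorem required strict inequality. In the present lemma the divergence is killed, not by $\alpha$, but by the positive lower limit $\delta$ replacing $1/n$: the would-be troublesome integrals become $\int_{\delta}^{\pi/2}$, which are $O(1)$. The constant $c_\alpha$ produced therefore depends on $\delta$ (hence on $V_1$ and $V$) and on $\alpha$ and $N$, but is independent of $f$ and of $n$, as required.
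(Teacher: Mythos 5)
Your argument is correct and follows essentially the same route as the paper: you discard the contribution from $\gamma\leq\delta$ using $f\equiv 0$ on $B(x,\delta)\subset V$ (the paper's $r_0$), estimate the remaining annuli via the kernel asymptotics (2.11)--(2.13) and the Stieltjes step of Theorem 2.2, and observe that the surviving radial integrals are coupled only to $f^{*}(\overline{x})$ and stay finite at $\alpha=\frac{N-1}{2}$ because the lower limit $\frac{1}{n}$ is replaced by $\delta$. Your finer splitting of the middle region at $\gamma=\frac{\pi}{2}$ and the separate treatment of small $n$ are only cosmetic variations of the paper's decomposition.
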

\begin{proof} We will choose a constant  \ $r_0 , \ (r_0 > 0)$  , \  such that for an arbitrary  \ $x$ \
     from \ $ V_1$ \
spherical ball of a radius equal to  \ $r_0 $   \ with the center
at  \ $x$ \ is a subset of  \ $V$  . For estimation of \
$S^{\alpha}_{*} f(x) $ \ we transform integral in right side of
(2.4) as $$ \int_{\gamma \leq r_0}  \ \ + \ \ \int_{r_0 \leq
\gamma \leq \pi - \frac{1}{n}} \ \ + \ \ \int_{\pi - \frac{1}{n}
\leq \gamma \leq \pi} . $$

 Note that because of \ $f(x) = 0$ \ on \ $V_1$ ,
first integral is equal to zero. Thus from (2.11)-(2.12) we obtain

$$ | S_n^{\alpha} f(x) | \ \leq \ c \ \Big{\{} \ n^{\frac{N - 1}{2}
- \alpha}   \int_{r_{0} \leq \gamma \leq  \pi  -  \frac{1}{n}}
\frac{ | f(y) |}{ \Big{(} \sin \gamma \Big{)}^{ \frac{N - 1}{2}}}
d \sigma (y) \ + \ $$

$$ + \ n^{\frac{N - 3}{2} - \alpha} \ \int_{r_0 \leq \gamma \leq \pi - \frac{1}{n}}\ \frac{| f(y) |}
{\Big{(} \sin \gamma \Big{)}^{ \frac{N + 1}{2}}} d \sigma(y) \ + \
$$

\begin{gather}\label{rce2}   + \ n^{-1} \ \int_{r_0 \leq \gamma \leq \pi - \frac{1}{n}}\
  | f(y) | d \sigma(y) +
  \ n^{ N - 1 - \alpha} \ \int_{ \frac{1}{n} \leq \gamma \leq \pi }  \ | f(y)
| d \sigma(y) \Big{\}} \end{gather} where a constant \ $c$ \
depends only on  \ $r_0$   . \ Then each term in right side of
(3.2) estimate by Hardy-Littlwood's maximal function

$$ | S_n^{\alpha} f(x) | \ \leq \ c \ \Big{\{} \ n^{\frac{N - 1}{2}
- \alpha}   \int_{r_{0} \leq \gamma \leq  \pi  -  \frac{1}{n}}
\frac{ | f(y) |}{ \Big{(} \sin \gamma \Big{)}^{ \frac{N - 1}{2}}}
d \sigma (y) \ + \ $$

$$ + \ n^{\frac{N - 3}{2} - \alpha} \ \int_{r_0 \leq \gamma \leq \pi - \frac{1}{n}}\ \frac{| f(y) |}
{\Big{(} \sin \gamma \Big{)}^{ \frac{N + 1}{2}}} d \sigma(y) \ + \
$$

\begin{gather}\label{rce2}   + \ n^{-1} \ \int_{r_0 \leq \gamma \leq \pi - \frac{1}{n}}\
  | f(y) | d \sigma(y) +
  \ n^{ N - 1 - \alpha} \ \int_{ \frac{1}{n} \leq \gamma \leq \pi }  \ | f(y)
| d \sigma(y) \Big{\}} \end{gather}
 Here a symbol  \quad $\overline{\gamma}$    \quad is a  spherical distance between
points \quad $\overline{x}$ \quad and \quad $y$ ,  \quad
$\overline{\gamma} = \gamma (\overline{x}, y)$   . \quad Replacing
with the maximal function we obtain
\begin{gather}\label{rce2} | S_n^{\alpha} f(x) | \ \leq \ c_0 \
\Big{\{}
 \ 1 + \ \int_{\frac{1}{n}}^{ \pi - r_0} \
  \big{(} \sin r \big{)}^{- \frac{N - 1}{2}} \ r^{N - 1 } dr \Big{\}}  f^{*}(\overline{x})  \end{gather}
 and a constant   \quad $c_0$ \quad    in  (3.4) does not have singularities when \quad $\alpha = \frac{N - 1}{2}$  .
Lemma 3.3  is proved. \end{proof}

\begin{lemma}\label{lemma1}
Let \quad $f \in L_1(S^N)$  and this function satisfies the
conditions of lemma 3.3 . \quad If \quad $\alpha = \frac{N -
1}{2}$ , \quad then for maximal operator (2.6) following
estimation is valid
\begin{gather}\label{rce2}
mes \Big{\{} x \in V_1 : S^{\alpha}_*f(x) \ > \ \mu \ > \ 0 \
\Big{\}} \ \leq \ c \frac{\parallel f \parallel_1}{\mu}
\end{gather}

\end{lemma}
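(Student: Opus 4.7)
The plan is to reduce Lemma 3.4 to a direct application of Lemma 3.3 together with the weak type $(1,1)$ property of the Hardy--Littlewood maximal function stated in (2.8). The two ingredients fit together very cleanly, so the proof should be short; the only subtlety is the antipodal map appearing on the right-hand side of (3.1).

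First, I would invoke Lemma 3.3: since $f \in L_1(S^N)$, $f = 0$ on $V$, and $\gamma(V_1,\partial V)>0$, the inequality
$$ S_{*}^{\alpha} f(x) \le c_{\alpha}\, f^{*}(\overline{x}) $$
holds for every $x \in V_1$, with $\alpha = \tfrac{N-1}{2}$ admissible (the constant $c_\alpha$ in Lemma 3.3 does not blow up at the critical index). Consequently
$$ \bigl\{ x \in V_1 : S_{*}^{\alpha} f(x) > \mu \bigr\} \ \subset \ \bigl\{ x \in V_1 : f^{*}(\overline{x}) > \mu/c_{\alpha} \bigr\}. $$

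Second, I would push the estimate over to the antipodal copy. The antipodal map $x \mapsto \overline{x}$ is an isometry of $S^N$, so it preserves $d\sigma$. Therefore
$$ mes\bigl\{ x \in V_1 : f^{*}(\overline{x}) > \mu/c_{\alpha}\bigr\} \ = \ mes\bigl\{ z \in \overline{V_1} : f^{*}(z) > \mu/c_{\alpha}\bigr\} \ \le \ mes\bigl\{ z \in S^N : f^{*}(z) > \mu/c_{\alpha}\bigr\}, $$
where $\overline{V_1}$ denotes the antipodal image of $V_1$.

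Third, I would apply the weak-$(1,1)$ bound (2.8) for $f^{*}$ to the right-hand side, obtaining
$$ mes\bigl\{ z \in S^N : f^{*}(z) > \mu/c_{\alpha}\bigr\} \ \le \ \frac{c\, c_{\alpha}\, \|f\|_1}{\mu}, $$
which is the required inequality (3.5) with the constant renamed. No further work should be needed.

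The main (very minor) obstacle is just bookkeeping the antipodal map correctly; once that is observed, Lemma 3.3 plus (2.8) does all the analytic work. The real content sits in Lemma 3.3, which already absorbed the critical-exponent difficulty at $\alpha = \tfrac{N-1}{2}$, so no additional care is required here for the critical index.
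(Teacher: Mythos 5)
Your proposal is correct and follows essentially the same route as the paper: the paper's proof likewise derives (3.5) immediately from Lemma 3.3 combined with the weak $(1,1)$ boundedness (2.8) of the Hardy--Littlewood maximal function. Your explicit handling of the measure-preserving antipodal map $x \mapsto \overline{x}$ just spells out a step the paper leaves implicit.
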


\begin{proof} Inequality  (3.5) immediately follows from lemma 3.3  and from the
fact that Hardy-Littlwood's maximal function is weak bounded from
\quad $ L_1(S^N)$    \quad to   \quad $ L_1(S^N)$    \quad . Lemma
3.4   is proved.
\end{proof}

Statements of theorem 3.2 immediate follows from estimation (3.5).

\end{document}